\documentclass{amsart}

\usepackage[left = 3 cm, right = 3 cm,
]{geometry}

\usepackage{
	amssymb,
	amsfonts,
	amsmath,
	mathtools,
	amscd,
	amsthm,
	commath,
	esdiff,
	cases
}

\usepackage{hyperref}
\usepackage{url}

\newtheorem{theorem}{Theorem}[section]
\newtheorem{lemma}[theorem]{Lemma}
\newtheorem{proposition}[theorem]{Proposition}
\newtheorem{corollary}[theorem]{Corollary}

\newtheorem*{theorem*}{Theorem}

\newtheorem*{lemma*}{Lemma}

\theoremstyle{definition}

\numberwithin{equation}{section}
\numberwithin{figure}{section}

\begin{document}
	
	\title[On umbilical real hypersurfaces of products of complex space forms]{On umbilical real hypersurfaces of products of\\ complex space forms}
	
	\author[I. Domingos]{Iury Domingos}
	\author[R. da Silva]{Ranilze da Silva}
	\author[A. de Sousa]{Alexandre de Sousa}
	\author[F. Vitório]{Feliciano Vitório}
	
	\address{Universidade Federal de Alagoas\\
		Av. Manoel Severino Barbosa S/N,
		57309-005 Arapiraca - AL, Brazil}
	\email{iury.domingos@arapiraca.ufal.br}

	\address{Universidade Federal de Alagoas\\
		Instituto de Matem\'{a}tica\\
		Campus A. C. Sim\~{o}es, BR 104 - Norte, Km 97, 57072-970, Macei\'o - AL, Brazil}
	\email{maria.ranilze@im.ufal.br}
	\email{feliciano@pos.mat.ufal.br}

	\address{Secretaria de Educa\c{c}\~{a}o do Estado do Cear\'{a}\\
		EEMTI Maria Thom\'{a}sia \\
		Rua Pol\^{o}nia 369, Maraponga, 60710-500, Fortaleza – CE, Brazil}
	\email{alexandre.mota3@prof.ce.gov.br}

	\thanks{This work was partially financed by the Coordena\c{c}\~{a}o
		de Aperfei\c{c}oamento de Pessoal de N\'{i}vel Superior - Brasil (CAPES) -
		Finance Code 001. I.~Domingos was partially supported by the Brazilian National Council for Scientific and Technological Development (CNPq), grant no.~409513/2023-7.}

	\keywords{Real hypersurfaces, Umbilical hypersurfaces, Complex space forms}
	
	\subjclass{53C42,  53C40}
	
	\begin{abstract}
		Tashiro and Tachibana proved that there exist no totally umbilical hypersurfaces in complex space forms with nonzero constant holomorphic sectional curvature, and it is also known that the shape operator of such hypersurfaces cannot be parallel.
		Motivated by these results, we study real hypersurfaces in products of complex space forms.
		We establish rigidity and nonexistence results for totally umbilical real hypersurfaces in this setting.
		In particular, we show that if a real hypersurface in a product of complex space forms does not admit a local product structure, then its shape operator cannot be parallel.
		Moreover, we provide a classification of totally umbilical real hypersurfaces, showing that those admitting a local almost product structure are necessarily totally geodesic or extrinsic hyperspheres.
	\end{abstract}
	
	\maketitle
	
	\section{Introduction}
	
	Real space forms and their submanifolds have been extensively studied by many researchers. There are also several works devoted to the study of products of two real space forms.
	B.~Daniel~\cite{Benoit} provided necessary and sufficient conditions for a Riemannian manifold to be isometrically immersed into the products $\mathbb{S}^n \times \mathbb{R}$ or $\mathbb{H}^n \times \mathbb{R}$.
	Kowalczyk~\cite{Kowalcsyk} extended Daniel's results to products of two space forms. Moreover, Lira, Tojeiro, and Vitório~\cite{L-T-V} proved an existence and uniqueness theorem for isometric immersions of semi-Riemannian manifolds into products of semi-Riemannian space forms.
	
	Concerning umbilical hypersurfaces, Souam and Van der Veken~\cite{Souan-VdV} established existence conditions for totally umbilical hypersurfaces in Riemannian products of the form $M^n \times \mathbb{R}$ and provided a complete description of such hypersurfaces. 
	Mendonça and Tojeiro~\cite{Mendonça-Tojeiro} classified umbilical submanifolds of arbitrary codimension, extending the classification obtained by Souam and Van der Veken in $\mathbb{S}^n \times \mathbb{R}$.  In the product of two $2$-dimensional space forms, Nakad and Roth~\cite{Nakad-Roth} gave a characterization of totally umbilical hypersurfaces. More recently, de Lima and dos Santos~\cite{Lima-Santos} characterized nontrivial totally umbilical hypersurfaces in product spaces $M \times I$ and in warped products $I \times_\omega M$, showing that they arise locally as graphs over isoparametric families of totally umbilical hypersurfaces of $M$, and extending classification results to $\mathbb{S}^n \times \mathbb{R}$ and $\mathbb{H}^n \times \mathbb{R}$.
	
	In complex space forms, the works of Niebergall and Ryan, Liu and Xiao, and Yano and Kon~\cite{Niegerball-Ryan, Liu-Xiao, Yano-kon} provide fundamental material for the study of hypersurfaces. In particular, Niebergall and Ryan~\cite{Niegerball-Ryan} presented a detailed construction of important examples in the complex projective space and the complex hyperbolic space.
	They proved a theorem (Theorem~\ref{t4}) stating that the shape operator of a hypersurface in a complex space form with constant holomorphic sectional curvature cannot be parallel. They also showed that there are no totally umbilical hypersurfaces in $\mathbb{CP}^n$ or $\mathbb{CH}^n$.
	This latter fact was first established by Tashiro and Tachibana~\cite{Tashiro-Tachibana} in 1963.
	
	Motivated by these works, we investigate real hypersurfaces, with particular emphasis on umbilical hypersurfaces, in products of two complex space forms, assuming that at least one factor has nonzero holomorphic sectional curvature. In Section~\ref{chapter1}, we present definitions, notation, and basic properties of complex manifolds that are useful to understand this work. In Section~\ref{chapter2}, we study products of complex space forms, deriving their curvature tensor and the fundamental equations of a hypersurface.
	In Section~\ref{chapter3}, we obtain a rigidity result for the shape operator (Theorem~\ref{t1}).
	More precisely, we prove that if a real hypersurface is not $F$-invariant (Proposition~\ref{f-invariancia}), in particular, if it does not admit the induced almost product Riemannian structure, then its shape operator cannot be parallel.
	We then investigate totally umbilical real hypersurfaces. In the $F$-invariant case, we show that such hypersurfaces necessarily have constant mean curvature and are therefore either totally geodesic or extrinsic hyperspheres (Theorem~\ref{t3}).
	On the other hand, if the hypersurface fails to be $F$-invariant at some point, we prove that the mean curvature cannot be constant, and consequently no such hypersurface can be totally geodesic or an extrinsic hypersphere (Theorem~\ref{t2}).
	
	\subsection*{Acknowledgements} This work was initiated while Alexandre de Sousa was a CAPES fellow at the Institute of Mathematics of the Federal University of Alagoas,
	whose members he would like to thank for their hospitality.
	
	
	\section{Preliminaries}\label{chapter1}
	
	This section is devoted to a brief introduction to complex manifolds and to recalling notation, definitions, and properties that are used throughout the text. We place particular emphasis on complex space forms, which constitute the main object of study of this work.
	
	\subsection{Complex manifolds}
	
	Let $M$ be an $m$-dimensional differentiable manifold. The manifold $M$ is said to be \textit{almost complex} if there exists a differentiable bundle map $J \colon TM \rightarrow TM$ such that $J^2 = -I$. The map $J$ is called an \textit{almost complex structure} on $M$. Observe that if $M$ admits an almost complex structure, then $(\det J)^2 = (-1)^m$, which implies that the real dimension $m$ of $M$ must be even.
	
	An almost complex manifold $M$ is called a \textit{K\"ahler manifold} if $J$ is compatible with the metric and satisfies $\nabla J = 0$. That is, for all $X,Y \in TM$, we have
	\begin{equation*}
		\langle X,Y \rangle = \langle JX,JY \rangle
	\end{equation*}
	and
	\begin{equation*}
		(\nabla_X J)Y = \nabla_X(JY) - J\nabla_X Y = 0.
	\end{equation*}
	
	On K\"ahler manifolds, one defines the holomorphic sectional curvature.
	
	The \emph{holomorphic sectional curvature} is the sectional curvature computed on holomorphic planes, that is, on planes spanned by vectors of the form $\{X, JX\}$. More precisely,
	\begin{equation*}
		K(X,JX) = \dfrac{\langle R(X,JX)X,JX\rangle}{|X|^2|JX|^2 - \langle X,JX\rangle^2}.
	\end{equation*}
	
	A K\"ahler manifold is said to be a \emph{complex space form} if it has constant holomorphic sectional curvature equal to $16c$. In this case, we denote by $\mathbb{CQ}^n_k$ a complex space form of complex dimension $n$ and constant holomorphic sectional curvature $k = 16c$. We emphasize that the factor $16$ above is merely a matter of convention.
	
	The curvature tensor of a complex space form with holomorphic sectional curvature $16c$ is given by
	\begin{equation*}
		R(X,Y)Z = 4c\bigl(X \wedge Y + JX \wedge JY + 2\langle X,JY\rangle J\bigr)Z.
	\end{equation*}
	When interacting with the almost complex structure $J$, the curvature tensor of a complex space form satisfies the following properties:
	\begin{itemize}
		\item[1.] $R(X,Y) = R(JX,JY)$,
		\item[2.] $R(X,JY) = -R(JX,Y)$,
		\item[3.] $R(X,Y)J = JR(X,Y)$,
		\item[4.] $\langle R(X,Y)JZ, JT\rangle = \langle R(X,Y)Z,T\rangle$,
		\item[5.] $\langle R(X,Y)JZ, T\rangle = -\langle R(X,Y)Z,JT\rangle$.
	\end{itemize}
	
	The complex space forms are $\mathbb{C}^n$, the complex Euclidean space when $c = 0$, $\mathbb{CP}^n$, the complex projective space when $c > 0$, and $\mathbb{CH}^n$, the complex hyperbolic space when $c < 0$.
	
	The \textit{complex Euclidean space} $\mathbb{C}^n$ is endowed with the Euclidean metric
	\[
	\langle X,Y\rangle = \operatorname{Re}\!\left(\sum_{i=1}^n x_i \overline{y}_i\right),
	\]
	where $X = (x_1,\dots,x_n)$ and $Y = (y_1,\dots,y_n)$ belong to $\mathbb{C}^n$. The \textit{complex projective space} $\mathbb{CP}^n$ is defined by
	\[
	\mathbb{CP}^n = (\mathbb{C}^{n+1} \setminus \{0\})/\{p \sim \lambda p \,;\, \lambda \in \mathbb{C} \setminus \{0\}\},
	\]
	and is equipped with the Fubini--Study metric, which we now briefly describe.
	
	Let $\pi \colon \mathbb{C}^{n+1} \setminus \{0\} \rightarrow \mathbb{CP}^n$ be the natural projection, and consider its restriction to the unit sphere $\mathbb{S}^{2n+1}(1)$, namely,
	\[
	\pi \colon \mathbb{S}^{2n+1}(1) \subset \mathbb{C}^{n+1} \setminus \{0\} \rightarrow \mathbb{CP}^n.
	\]
	This restriction is surjective, and two points $p,q \in \mathbb{S}^{2n+1}(1)$ have the same image if and only if they lie on the same great circle, that is, $p = e^{it}q$ for some $t \in \mathbb{R}$. We endow $\mathbb{C}^{n+1} \setminus \{0\}$ with the Euclidean metric.
	Note that the position vector $P$, restricted to $\mathbb{S}^{2n+1}(1)$, is normal to $\mathbb{S}^{2n+1}(1)$, and the vector field $\eta = iP$ defines a unit vector field tangent to $\mathbb{S}^{2n+1}(1)$. One verifies that $(d\pi)_p$ is surjective and has kernel $\operatorname{span}\{\eta_p\}$, where $\eta_p = ip$, for any $p \in \mathbb{S}^{2n+1}(1)$.
	Thus, for any vector field $X \in T\mathbb{CP}^n$, there exists a unique vector field $\overline{X} \in T\mathbb{S}^{2n+1}(1)$ such that $(d\pi)\overline{X} = X$ and $\overline{X}$ is orthogonal to $\eta$. The vector field $\overline{X}$ is called the \textit{horizontal lift} of $X$, and the Fubini--Study metric on $\mathbb{CP}^n$ is defined by
	\[
	\langle X,Y \rangle_{\mathbb{CP}^n} = \langle \overline{X},\overline{Y} \rangle_{\mathbb{S}^{2n+1}}.
	\]
	
	For the \textit{complex hyperbolic space} $\mathbb{CH}^n$, we consider $\mathbb{C}^{n+1}_1 = (\mathbb{C}^{n+1},\langle \cdot,\cdot \rangle)$, where the metric is given by
	\[
	\langle X,Y\rangle = \operatorname{Re}\!\left(-x_0\overline{y}_0 + \sum_{i=1}^n x_i\overline{y}_i\right).
	\]
	We then define
	\[
	H_1^{2n+1} = \{p \in \mathbb{C}^{n+1}_1 \,;\, \langle p,p\rangle = -1\}.
	\]
	Using the same reasoning as above, with $H_1^{2n+1}$ in place of $\mathbb{S}^{2n+1}(1)$, we define $\mathbb{CH}^n$ as the space of equivalence classes of $H_1^{2n+1}$ under the action $p \mapsto \lambda p$. We thus obtain the projection $\pi \colon H_1^{2n+1} \rightarrow \mathbb{CH}^n$ and define its metric in the same manner as for the complex projective space.

	
	\section{Real hypersurfaces in products of complex space forms}\label{chapter2}
	
	\subsection{Products of complex space forms}
	
	Let $\mathbb{CQ}^{n_{1}}_{k_1}$ and $\mathbb{CQ}^{n_{2}}_{k_2}$ be Riemannian manifolds with constant holomorphic sectional curvatures $k_1 = 16c_1$ and $k_2 = 16c_2$, endowed with complex structures $J_1$ and $J_2$, respectively. We consider the product Riemannian manifold
	\[
	\overline{M} = \mathbb{CQ}^{n_{1}}_{k_1} \times \mathbb{CQ}^{n_{2}}_{k_2},
	\]
	equipped with the product metric. We denote by $\pi_i \colon \overline{M} \rightarrow \mathbb{CQ}^{n_i}_{k_i}$ the natural projection onto $\mathbb{CQ}_i := \mathbb{CQ}^{n_i}_{k_i}$, for $i \in \{1,2\}$.
	
	On $\overline{M}$, we consider the complex structure defined by $J = (J_1,J_2)$ and the almost product structure given by the endomorphism $F \colon T\overline{M} \rightarrow T\overline{M}$ defined by
	\[
	F = \pi_1 - \pi_2.
	\]
	It is customary to write $F = \pi_1 - \pi_2$ as shorthand for the pair $(\pi_1,-\pi_2)$.
	
	With this notation, we have $F \neq I$, $F^2 = I$.  So it follows that
	\[
	\langle FX, Y \rangle = \langle X, FY \rangle
	\Longleftrightarrow \langle FX, FY \rangle = \langle X, Y \rangle,
	\quad \forall X,Y \in \Gamma(T\overline{M}),
	\]
	where $I$ denotes the identity map on $T\overline{M}$.
	
	In order to study the curvature tensor of $\overline{M}$, we introduce the auxiliary operators
	\[
	\overline{L}_i = I + \varepsilon_i F \colon T\overline{M} \to T\overline{M},
	\]
	with $\varepsilon_1 = 1$ and $\varepsilon_2 = -1$. Using the properties of the complex structure $J$ and the almost product structure $F$, we obtain the following expression for the curvature tensor of the product of two complex space forms.
	
	\begin{proposition}\label{prop-curvatura}
		The curvature tensor $\overline{\mathcal{R}} \colon T\overline{M} \times T\overline{M} \times T\overline{M} \rightarrow T\overline{M}$ of $\overline{M} = \mathbb{CQ}_1 \times \mathbb{CQ}_2$ is given by
		\begin{equation*}
			\overline{\mathcal{R}}(\overline{X},\overline{Y})\overline{Z}
			= \sum_{i=1}^2 \frac{c_i}{2} \left[
			\overline{L}_i\overline{X} \wedge \overline{L}_i\overline{Y}
			+ J\overline{L}_i\overline{X} \wedge J\overline{L}_i\overline{Y}
			+ 2\langle \overline{L}_i\overline{X}, J\overline{L}_i\overline{Y} \rangle J
			\right]\overline{L}_i\overline{Z},
		\end{equation*}
		where $\overline{X},\overline{Y},\overline{Z} \in T\overline{M}$ and $\overline{L}_i = I + \varepsilon_i F$, with $\varepsilon_1 = 1$ and $\varepsilon_2 = -1$.
	\end{proposition}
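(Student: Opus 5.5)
Because $F=\pi_1-\pi_2$, the operators $\overline{L}_i=I+\varepsilon_iF$ equal $2P_i$, where $P_i\colon T\overline{M}\to T\overline{M}$ is the orthogonal projection onto the tangent distribution of the $i$-th factor. Indeed, identifying $\pi_i$ with its differential, $\pi_1+\pi_2=I$, so $\overline{L}_1=I+F=2\pi_1$ and $\overline{L}_2=I-F=2\pi_2$. The plan is therefore to use the standard fact that the curvature of a Riemannian product splits as $\overline{\mathcal{R}}(\overline X,\overline Y)\overline Z=\sum_i R_i(P_i\overline X,P_i\overline Y)P_i\overline Z$, insert the curvature formula of a complex space form from the Preliminaries into each summand, and then rewrite everything in terms of $\overline{L}_i$. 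The factor $c_i/2$ should come out of the homogeneity bookkeeping.

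The steps, in order, are as follows.

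\emph{Step 1.} Justify the product splitting. The Levi-Civita connection of the product metric preserves both factor distributions, so $\overline\nabla P_i=0$, equivalently $\overline\nabla F=0$. Hence $\overline{\mathcal R}(\overline X,\overline Y)$ commutes with $P_i$. Moreover, $\overline{\mathcal R}(\overline X,\overline Y)$ vanishes whenever $\overline X$ and $\overline Y$ lie in different factors, and it restricts to $R_i$ on the $i$-th factor. This is standard and I would only cite it briefly.

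\emph{Step 2.} Write
\[
R_i(P_iX,P_iY)P_iZ=4c_i\bigl(P_iX\wedge P_iY+J_iP_iX\wedge J_iP_iY+2\langle P_iX,J_iP_iY\rangle J_i\bigr)P_iZ .
\]

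\emph{Step 3.} Replace $J_i$ by $J$. Since $J=(J_1,J_2)$, we have $JP_i=P_iJ=J_iP_i$. Here one must check that the wedge $(U\wedge V)W=\langle V,W\rangle U-\langle U,W\rangle V$, computed in $T\overline M$, agrees with the factorwise one when $U,V,W$ are tangent to the $i$-th factor. It does, because both the metric and $J$ preserve the splitting.

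\emph{Step 4.} Substitute $P_i=\tfrac12\overline L_i$. The expression is trilinear in $(X,Y,Z)$, so it scales by $(1/2)^3=1/8$. This yields $4c_i\cdot\tfrac18=c_i/2$, which is exactly the coefficient in Proposition~\ref{prop-curvatura}.

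No step is deep; the main risk lies in the bookkeeping of Step 3 and Step 4. In the term $2\langle P_iX,JP_iY\rangle JP_iZ$, every projection must be accounted for, so that the cubic scaling is uniform across all three terms; the inner product is bilinear and is then multiplied by $JP_iZ$. I would also double-check the convention that $J$ acting on $\overline L_iZ$ stays in the $i$-th factor, so that no cross terms arise.

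Finally, I would verify the formula on holomorphic planes. Take $X=(x,0)$ with $|x|=1$. The formula gives $\langle\overline{\mathcal R}(X,JX)X,JX\rangle=16c_1$, as required, which confirms the constant.
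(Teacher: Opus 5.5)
Your proposal is correct and is essentially the paper's proof: split the product curvature factorwise, insert the complex space form formula with constant $4c_i$, use $\overline L_i=2\pi_i$ together with $J\pi_i=\pi_iJ$ to replace $J_i$ by $J$, and absorb the factor $(1/2)^3$ to obtain $c_i/2$. Your explicit trilinearity bookkeeping is simply a more careful version of the paper's ``substituting these expressions''. The one slip is in your closing sanity check, which is not part of the proof: with the wedge convention you state, the formula gives $\langle\overline{\mathcal R}(X,JX)X,JX\rangle=-16c_1$ and $\langle\overline{\mathcal R}(X,JX)JX,X\rangle=16c_1$, and this sign discrepancy comes from the paper's own sign convention in its definition of $K(X,JX)$, not from your argument.
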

	
	\begin{proof}
		Let $\overline{X} \in T\overline{M}$ and denote by $\overline{X}_i := \pi_i \overline{X}$ its projection onto $\mathbb{CQ}_i$. Let $\overline{R}_i$ be the curvature tensor of $\mathbb{CQ}_i$. For $\overline{X}, \overline{Y}, \overline{Z} \in T\overline{M}$, we have
		\begin{align*}
			\overline{\mathcal{R}}(\overline{X},\overline{Y})\overline{Z}
			&= \overline{R}_1(\overline{X}_1,\overline{Y}_1)\overline{Z}_1
			+ \overline{R}_2(\overline{X}_2,\overline{Y}_2)\overline{Z}_2 \\
			&= 4c_1\bigl(\overline{X}_1 \wedge \overline{Y}_1
			+ J_1\overline{X}_1 \wedge J_1\overline{Y}_1
			+ 2\langle \overline{X}_1, J_1\overline{Y}_1 \rangle J_1\bigr)\overline{Z}_1 \\
			&\quad + 4c_2\bigl(\overline{X}_2 \wedge \overline{Y}_2
			+ J_2\overline{X}_2 \wedge J_2\overline{Y}_2
			+ 2\langle \overline{X}_2, J_2\overline{Y}_2 \rangle J_2\bigr)\overline{Z}_2.
		\end{align*}
		
		By definition of the operators $\overline{L}_1$ and $\overline{L}_2$, we obtain
		\[
		\overline{L}_1 \overline{X} = 2\overline{X}_1,
		\qquad
		\overline{L}_2 \overline{X} = 2\overline{X}_2.
		\]
		Moreover, since $FJ = JF$, we have
		\[
		J_1\overline{L}_1\overline{X} = J\overline{L}_1\overline{X},
		\qquad
		J_2\overline{L}_2\overline{X} = J\overline{L}_2\overline{X}.
		\]
		Substituting these expressions into the previous formula we get our assertion.
	\end{proof}

	\subsection{Real hypersurfaces and their fundamental equations}
	
	Let $\overline{M} = \mathbb{CQ}_1 \times \mathbb{CQ}_2$ be endowed with its Levi-Civita connection $\overline{\nabla}$. Let $M$ be an oriented real hypersurface of $\overline{M}$. The Levi-Civita connection $\nabla$ of the induced metric on $M$ and the shape operator $A$ are characterized by
	\[
	\overline{\nabla}_X Y = \nabla_X Y + \langle AX, Y \rangle \nu,
	\qquad
	AX = -\overline{\nabla}_X \nu,
	\]
	where $\nu$ denotes the unit normal vector field along $M$. Here, the mean curvature function $H$ is defined as the normalized trace of the shape operator, namely,
	\[
	H = \frac{1}{2n-1} \operatorname{tr} A.
	\]
	
	The almost product structure $F$ induces on $M$ a vector field $V \in \Gamma(TM)$, a smooth function $h \colon M \rightarrow \mathbb{R}$, and an endomorphism $f \colon TM \rightarrow TM$ such that, for all $X \in \Gamma(TM)$,
	\[
	FX = fX + \langle V, X \rangle \nu,
	\qquad
	F\nu = V + h\nu.
	\]
	
	For the complex structure, we have
	\[
	JX = \varphi X + \langle W, X \rangle \nu,
	\]
	where $W := -J\nu$ is the structure vector field and $\varphi \colon TM \rightarrow TM$ is skew-symmetric. From the definitions of $J$ and $F$, it follows that $FJ = JF$. One easily verifies that, for all $X \in \Gamma(TM)$,
	\begin{equation}\label{eq2.2}
		\langle V,W \rangle = 0,
		\quad
		\varphi^2 X = -X + \langle X, W \rangle W,
		\quad
		\langle W,W \rangle = 1,
		\quad
		\varphi W = 0.
	\end{equation}
	
	We also recall the following properties (see~\cite{Nakad-Roth}). Let $M$ be a real hypersurface of $\overline{M} = \mathbb{CQ}_1 \times \mathbb{CQ}_2$ and let $X \in TM$. Then:
	\begin{equation}
		\left\{
		\begin{aligned}
			& f \text{ is symmetric}, \\
			& fV = -hV, \\
			& h^2 + |V|^2 = 1, \\
			& f\varphi X + \langle W, X \rangle V = \varphi fX - \langle V, X \rangle W, \\
			& fW = hW - \varphi V.
		\end{aligned}
		\right.
		\label{eq13}
	\end{equation}
	From the decomposition of $F$, it follows that
	\[
	f^2 X = X - \langle V, X \rangle V,
	\ \ \ \
	\langle fX, Y \rangle = \langle X, fY \rangle,
	\ \ \ \
	\langle fX, fY \rangle = \langle X, Y \rangle - \langle V, X \rangle \langle V, Y \rangle.
	\]
	
	Consequently, we get the following characterization of the $F$-invariant subsets of the real hypersurfaces (see~\cite[pg. 17]{SilvaUmainvestigacaohipersuperficies2023} and~\cite[pg. 424]{Yano-kon}).
	\begin{proposition}
		\label{f-invariancia}
		Let (M,g) be an oriented real hypersurface in a product of complex space forms.
		If  \(U \subseteq M\) is a nonempty subset, then the following statements are equivalent:
		\begin{itemize}
			\item[1.] \(U\) is $F$-invariant, that is, \(F(T_pM) \subset T_pM, \quad \forall p \in U\);
			\item[2.] \(V\vert_U \equiv 0\);
			\item[3.] \((f, g)\) is an almost product Riemannian structure on \(U\),
			that is, \(f^2 = I\) and \(f^{\ast}g = g\) everywhere on U.
		\end{itemize}
	\end{proposition}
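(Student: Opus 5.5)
I will prove the cycle of implications $1 \Rightarrow 2 \Rightarrow 3 \Rightarrow 1$ pointwise at each $p \in U$. The only inputs needed are the decompositions $FX = fX + \langle V,X\rangle \nu$ and $F\nu = V + h\nu$, the identity $f^2X = X - \langle V,X\rangle V$, and the symmetry of $f$ together with $\langle fX,fY\rangle = \langle X,Y\rangle - \langle V,X\rangle\langle V,Y\rangle$, all recorded just before the statement.

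For $1 \Rightarrow 2$, assume $F(T_pM) \subset T_pM$. Then the normal component $\langle V,X\rangle$ of $FX$ vanishes for every $X \in T_pM$. Since $V_p \in T_pM$, taking $X = V_p$ gives $|V_p|^2 = 0$, so $V_p = 0$.

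For $2 \Rightarrow 3$, put $V = 0$ into the identities above. This gives $f^2 = I$ and $\langle fX,fY\rangle = \langle X,Y\rangle$, i.e.\ $f^*g = g$, at every point of $U$. So $(f,g)$ is an almost product Riemannian structure on $U$.

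For $3 \Rightarrow 1$, assume $f^2 = I$ on $U$. The identity for $f^2$ then gives $\langle V,X\rangle V = 0$ for all $X$, and choosing $X = V$ yields $|V|^4 = 0$, so $V = 0$. (The condition $f^*g = g$ alone also suffices: it gives $\langle V,X\rangle\langle V,Y\rangle = 0$, and then one takes $X = Y = V$.) With $V = 0$ we have $FX = fX \in T_pM$, which is $F$-invariance.

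The argument is essentially routine. The only point needing care is the derivation of $f^2X = X - \langle V,X\rangle V$ from $F^2 = I$, which the paper has already stated. If one wants to check it, apply $F$ to $FX$ and take the tangential part:
\[
X = F^2X = f^2X + \langle V,X\rangle V + \bigl(\langle V,fX\rangle + h\langle V,X\rangle\bigr)\nu ,
\]
and the tangential component of this identity is exactly the formula used above. Everything else is linear algebra at a single point, so no step should present a real obstacle.
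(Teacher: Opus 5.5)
Your proof is correct and takes essentially the paper's route: the paper gives no written proof, presenting the proposition as a consequence of $f^2X = X - \langle V,X\rangle V$ and $\langle fX,fY\rangle = \langle X,Y\rangle - \langle V,X\rangle\langle V,Y\rangle$ and citing outside references. Your pointwise cycle $1\Rightarrow 2\Rightarrow 3\Rightarrow 1$, together with your check of the $f^2$ identity from $F^2=I$, supplies exactly the omitted linear algebra.
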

	
	The relation between the connections $\overline{\nabla}$ and $\nabla$ yields the Gauss and Codazzi equations, which correspond, respectively, to the tangential and normal components of the curvature tensor.
	
	\begin{proposition}
		The Gauss and Codazzi equations are given by:
		\begin{align}
			R(X,Y)Z &= \sum_{i=1}^2 \frac{c_i}{2} \Big\{ (L_iX \wedge L_iY)L_iZ
			+ (\varphi L_iX \wedge \varphi L_iY)L_iZ \nonumber \\
			&\quad + \langle V,Z \rangle \big[ \langle V,Y \rangle L_iX - \langle V,X \rangle L_iY \nonumber \\
			&\qquad + \langle L_iY, W \rangle (\varphi L_iX - \langle V,X \rangle W)
			- \langle L_iX, W \rangle (\varphi L_iY - \langle V,Y \rangle W) \big] \nonumber \\
			&\quad + \big[(\langle V,X \rangle)\varphi L_iY - \langle V,Y \rangle \varphi L_iX\big] \wedge W \, L_iZ \nonumber \\
			&\quad + 2\big(\langle L_iX, \varphi L_iY - \langle V,Y \rangle W \rangle
			+ \varepsilon_i \langle L_iY, W \rangle \langle V,X \rangle\big)
			(\varphi L_iZ - \langle V,Z \rangle W) \Big\} \nonumber \\
			&\quad + (AX \wedge AY)Z, \nonumber
		\end{align}
		and
		\begin{align}\label{eq-codazzi}
			d_\nabla A(Y,X)
			&= (\nabla_X A)Y - (\nabla_Y A)X \nonumber \\
			&= \sum_{i=1}^2 \frac{c_i}{2} \Big[
			2\varepsilon_i L_i((Y \wedge X)V)
			+ L_i \varphi L_i((Y \wedge X)L_iW) \nonumber \\
			&\quad + (3\varepsilon_i \langle (Y \wedge X)V, L_iW \rangle
			+ 2\langle X, L_i \varphi L_iY \rangle)L_iW
			\Big],
		\end{align}
		where $L_i = I + \varepsilon_i f$, with $\varepsilon_1 = 1$ and $\varepsilon_2 = -1$.
	\end{proposition}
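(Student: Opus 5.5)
The plan is to derive both equations from the Gauss formula $\overline{\nabla}_X Y = \nabla_X Y + \langle AX,Y\rangle\nu$ and the Weingarten formula $\overline{\nabla}_X\nu = -AX$, and then to substitute Proposition~\ref{prop-curvatura} and decompose everything into tangential and normal parts. Differentiating twice gives
\[
\overline{\mathcal{R}}(X,Y)Z = R(X,Y)Z - (AX\wedge AY)Z + \langle (\nabla_X A)Y - (\nabla_Y A)X, Z\rangle \nu ,
\]
with the convention $(X\wedge Y)Z = \langle Y,Z\rangle X - \langle X,Z\rangle Y$. Taking the tangential part gives $R(X,Y)Z = [\overline{\mathcal{R}}(X,Y)Z]^{T} + (AX\wedge AY)Z$. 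Taking the inner product with $\nu$ gives $d_\nabla A(Y,X) = ((\nabla_X A)Y-(\nabla_Y A)X)$, whose inner product with $Z$ equals $\langle \overline{\mathcal{R}}(X,Y)Z,\nu\rangle$. By the symmetries of $\overline{\mathcal{R}}$ this is $\langle \overline{\mathcal{R}}(X,Y)\nu, Z\rangle$ up to sign, so the Codazzi vector is the tangential part of $\overline{\mathcal{R}}(X,Y)\nu$, up to the sign fixed by the orientation conventions.

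Next I translate the ambient operators into intrinsic data. For tangent $X$ we have $\overline{L}_iX = L_iX + \varepsilon_i\langle V,X\rangle\nu$ and $\overline{L}_i\nu = \varepsilon_i V + (1+\varepsilon_i h)\nu$. For $J$ we have $J X = \varphi X + \langle W,X\rangle \nu$ and $J\nu = -W$. Hence
\[
J\overline{L}_iX = \varphi L_iX + \langle W, L_iX\rangle\nu - \varepsilon_i\langle V,X\rangle W .
\]
I insert these into the three terms of Proposition~\ref{prop-curvatura}: the two wedge terms and the $2\langle\cdot,J\cdot\rangle J$ term. Each wedge $(a+\alpha\nu)\wedge(b+\beta\nu)$ applied to $c+\gamma\nu$ splits into tangential and normal pieces. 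I keep the tangential pieces for Gauss, and the $\nu$-components of $\overline{\mathcal{R}}(X,Y)\nu$'s tangent part for Codazzi. The relations \eqref{eq2.2} and \eqref{eq13} are used to simplify, especially $\langle V,W\rangle=0$, $\varepsilon_i^2=1$, the commutation $f\varphi X + \langle W,X\rangle V = \varphi fX - \langle V,X\rangle W$, and $fW = hW - \varphi V$.

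For Codazzi I would use $\overline{\mathcal{R}}(X,Y)\nu$ with $\overline{L}_i\nu = \varepsilon_iV + (1+\varepsilon_ih)\nu$. By linearity and $L_iV = (1-\varepsilon_i h)V$ (from $fV=-hV$), each such contribution can be rewritten in the form $L_i(\cdot)$. The terms collect as follows:
\begin{itemize}
\item $\overline{L}_iX\wedge\overline{L}_iY$ yields $2\varepsilon_iL_i((Y\wedge X)V)$, using that the normal parts combine with $1+\varepsilon_ih$ together with $h^2+|V|^2=1$.
\item The $J$-wedge term yields $L_i\varphi L_i((Y\wedge X)L_iW)$ plus a $W$-multiple.
\item The holomorphic term contributes $2\langle X, L_i\varphi L_iY\rangle L_iW$, via $J\overline{L}_i\nu = -\overline{L}_iW$ after applying $JF=FJ$.
\end{itemize}
The coefficient $3\varepsilon_i$ arises from summing these $W$-multiples.

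The main obstacle is purely bookkeeping. The step most prone to error is checking that the scattered terms with coefficients $\varepsilon_i$, $h$ and $|V|^2$ collapse into exactly the stated compact forms. To handle this, I would first verify each formula in the $F$-invariant case $V=0$, where $h=\pm1$ and $L_i$ is twice a projection, and then track the $V$-dependent corrections separately. Finally, I would check the results against the single-factor formulas of Niebergall--Ryan obtained by setting $c_2=0$.
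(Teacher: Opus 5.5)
Your method is the paper's method. The paper's proof is only the remark that both equations follow from Proposition~\ref{prop-curvatura} together with \eqref{eq2.2} and \eqref{eq13}, and your Gauss--Weingarten splitting is exactly that computation.

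For the Codazzi half your plan goes through and gives the displayed formula verbatim, with a few corrections to your sketch:
\begin{itemize}
\item The sign is forced, $d_\nabla A(Y,X)=-(\overline{\mathcal{R}}(X,Y)\nu)^{T}$; it is not an orientation convention.
\item Your first bullet only needs $L_iV=(1-\varepsilon_ih)V$, since $\langle \overline{L}_iY,\overline{L}_i\nu\rangle=\varepsilon_i(1-\varepsilon_ih)\langle V,Y\rangle+\varepsilon_i(1+\varepsilon_ih)\langle V,Y\rangle=2\varepsilon_i\langle V,Y\rangle$.
\item The extra pieces are multiples of $L_iW$, not $W$. Besides $L_i\varphi L_i((Y\wedge X)L_iW)$, the $J$-wedge term gives $\varepsilon_i\langle (Y\wedge X)V,L_iW\rangle L_iW$. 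The holomorphic term gives $2\big(\langle X,L_i\varphi L_iY\rangle+\varepsilon_i\langle (Y\wedge X)V,L_iW\rangle\big)L_iW$, not just $2\langle X,L_i\varphi L_iY\rangle L_iW$. That is where $3\varepsilon_i$ comes from.
\end{itemize}

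The genuine gap is in the Gauss half: the final step, where the terms are supposed to collapse into exactly the stated compact form, cannot be carried out. Use your own (correct) formula for $J\overline{L}_iX$, whose tangential part is $P_iX:=\varphi L_iX-\varepsilon_i\langle V,X\rangle W$. The $i$-th summand of $[\overline{\mathcal{R}}(X,Y)Z]^{T}$ then comes out as
\begin{align*}
&\frac{c_i}{2}\Big\{(L_iX\wedge L_iY)L_iZ+\langle V,Z\rangle\big(\langle V,Y\rangle L_iX-\langle V,X\rangle L_iY\big)+(P_iX\wedge P_iY)L_iZ\\
&\qquad+\varepsilon_i\langle V,Z\rangle\big(\langle L_iY,W\rangle P_iX-\langle L_iX,W\rangle P_iY\big)+2\big(\langle L_iX,P_iY\rangle+\varepsilon_i\langle V,X\rangle\langle L_iY,W\rangle\big)P_iZ\Big\}.
\end{align*}
The displayed Gauss equation is this expression with $P_i$ replaced by $\varphi L_i-\langle V,\cdot\rangle W$ and with the factor $\varepsilon_i$ in front of $\langle V,Z\rangle(\cdots)$ dropped. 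That is harmless for $i=1$ but wrong for $i=2$. In fact the displayed right-hand side is not skew in $X,Y$. At $X=Y$ every term vanishes except $-2c_2\langle V,X\rangle\langle L_2X,W\rangle(\varphi L_2Z-\langle V,Z\rangle W)$. Take $X=V+W$ and $Z=V$, and use $L_2W=(1-h)W+\varphi V$ and $L_2V=(1+h)V$. This term becomes $-2c_2(1-h)|V|^2\big((1+h)\varphi V-|V|^2W\big)$, which is nonzero whenever $c_2\neq 0$ and $V\neq 0$, contradicting $R(X,X)=0$.

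Your proposed safeguards cannot detect this. Every discrepancy carries both a factor $1-\varepsilon_i$ and a factor $\langle V,\cdot\rangle$, so it vanishes exactly in the two cases you planned to check, $V=0$ and $c_2=0$. What your computation actually proves is the corrected Gauss equation above, not the displayed one. The Codazzi equation, which is the only one used later in the paper, is unaffected.
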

	
	\begin{proof}
		The proof follows from the properties of the decompositions of the complex structure and the almost product structure given in equations~\eqref{eq2.2} and~\eqref{eq13}.
	\end{proof}
	
	We conclude this section with a technical lemma that will be used in the next section.
	
	\begin{lemma}\label{l1}
		Let $\{e_j\}_{j=1}^{n-1}$ be a local orthonormal frame of $W^\perp$. Then
		\[
		\{W, e_j, \varphi e_j\}_{j=1}^{n-1}
		\]
		forms a local orthonormal frame. Moreover, the following identities hold:
		\begin{eqnarray}
			d_\nabla A(W, e_j)&=&\sum_{i=1}^{2} (4c_i(\varepsilon _i+h) \left \langle e_j,V \right \rangle W +\sum_{k} \frac{c_i}{2} \{[ (7+\varepsilon _ih)\left \langle e_j, V \right \rangle \left \langle  V,\varphi e_k\right \rangle \nonumber\\ &&+\left ( 1-\varepsilon _ih \right )\left \langle\varphi e_j,  V \right \rangle \left \langle V,e_k \right \rangle -(1+\varepsilon _ih) \left \langle L_i\varphi L_ie_j,e_k  \right \rangle ] e_k \nonumber\\
			&& +  [- (7+\varepsilon _ih)\left \langle e_j, V \right \rangle \left \langle  V,e_k\right \rangle +\left ( 1-\varepsilon _ih \right )\left \langle\varphi e_j,  V \right \rangle \left \langle \varphi V,e_k \right \rangle \nonumber \\
			&&+(1+\varepsilon _ih)\left \langle\varphi L_i\varphi L_ie_j,e_k  \right \rangle ] \varphi e_k \}), \nonumber
		\end{eqnarray}
		
		\begin{eqnarray}
			d_\nabla A(W,\varphi e_j)&=&\sum_{i=1}^{2} (4c_i (\varepsilon _i+h)\left \langle  V,\varphi e_j \right \rangle W +\sum_{k}    \frac{c_i}{2} \{[ (7+\varepsilon _ih)\left \langle V,\varphi e_j \right \rangle \left \langle V,\varphi e_k\right \rangle \nonumber \\ &&-\left (1-\varepsilon _ih \right )\left \langle V,e_j \right \rangle \left \langle V,e_k \right \rangle -(1+\varepsilon _ih)\left \langle L_i\varphi L_i\varphi e_j,e_k  \right \rangle ] e_k \nonumber\\
			&&+ [- (7+\varepsilon _ih)\left \langle \varphi e_j, V \right \rangle \left \langle  V,e_k\right \rangle -\left ( 1-\varepsilon _ih \right )\left \langle e_j,  V \right \rangle \left \langle  V,\varphi e_k \right \rangle \nonumber \\
			&&-(1+\varepsilon _ih)\left \langle L_i\varphi L_i\varphi e_j,\varphi e_k  \right \rangle ] \varphi e_k \})\nonumber
		\end{eqnarray}
		and
		\begin{eqnarray}
			d_\nabla A(e_j,\varphi e_l)&=&\sum_{i=1}^{2}\{ c_i[ (3+\varepsilon _ih)(\left \langle V,\varphi e_l \right \rangle \left \langle V,\varphi e_j\right \rangle +\left \langle V,e_l \right \rangle \left \langle V,e_j\right \rangle \nonumber \\
			&& -(1+\varepsilon _ih)\left \langle L_i\varphi L_i\varphi e_l,e_j  \right \rangle ] W\nonumber\\
			&&+ \sum_k \frac{c_i}{2}[2\varepsilon_i(\langle V,\varphi e_l\rangle \langle L_ie_j,e_k\rangle - \langle V,e_j\rangle \langle L_i\varphi e_l, e_k\rangle) \nonumber\\
			&&-\varepsilon_i (\langle V, e_l\rangle \langle L_i\varphi L_i e_j,e_k\rangle +\langle V,\varphi e_j\rangle \langle L_i\varphi L_i\varphi e_l,e_k \rangle) \nonumber\\
			&& +\varepsilon_i \langle V,\varphi e_k \rangle\left( 3(\langle V,\varphi e_j\rangle \langle V,\varphi e_l\rangle + \langle V, e_j\rangle \langle V,e_l\rangle)-2 \langle L_i \varphi L_i \varphi e_l, e_j \rangle \right) ]e_k\nonumber\\
			&&+ \sum_k\frac{c_i}{2}[2\varepsilon_i(\langle V,\varphi e_l\rangle \langle L_ie_j,\varphi e_k\rangle - \langle V,e_j\rangle \langle L_i\varphi e_l,\varphi e_k\rangle) \nonumber\\
			&&-\varepsilon_i (\langle V, e_l\rangle \langle L_i\varphi L_i e_j,\varphi e_k\rangle +\langle V,\varphi e_j\rangle \langle L_i\varphi L_i\varphi e_l,\varphi e_k \rangle) \nonumber\\
			&& -\varepsilon_i \langle V, e_k \rangle\left( 3(\langle V,\varphi e_j\rangle \langle V,\varphi e_l\rangle + \langle V, e_j\rangle \langle V,e_l\rangle)-2 \langle L_i \varphi L_i \varphi e_l, e_j \rangle \right) ]\varphi e_k\nonumber\}.
		\end{eqnarray}
	\end{lemma}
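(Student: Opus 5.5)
The proof has two parts: the frame statement, then the three identities obtained by substituting into the Codazzi equation~\eqref{eq-codazzi} and expanding in that frame.

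\emph{The frame.} By \eqref{eq2.2}, $\varphi$ is skew-symmetric, $\varphi W=0$, and $\varphi^2X=-X$ for $X\in W^\perp$. Hence $\varphi$ maps $W^\perp$ to itself: for $X\in W^\perp$, $\langle \varphi X,W\rangle=-\langle X,\varphi W\rangle=0$. On $W^\perp$ it is an orthogonal complex structure, since $\langle\varphi X,\varphi Y\rangle=-\langle X,\varphi^2Y\rangle=\langle X,Y\rangle$. Also $\langle \varphi X, X\rangle=0$ by skew-symmetry. So $W^\perp$, of real dimension $2n-2$, admits a local orthonormal $\varphi$-basis $\{e_j,\varphi e_j\}_{j=1}^{n-1}$, built by the usual Gram--Schmidt argument: pick a unit $e_1$, pass to the orthogonal complement of $\mathrm{span}\{e_1,\varphi e_1\}$, which is again $\varphi$-invariant, and iterate. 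Adjoining the unit vector $W$ gives the local orthonormal frame $\{W,e_j,\varphi e_j\}$ of $TM$. (The lemma's phrase ``orthonormal frame of $W^\perp$'' should be read as such a half-basis.)

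\emph{The identities.} Substitute $(Y,X)=(W,e_j)$, $(W,\varphi e_j)$ and $(e_j,\varphi e_l)$ into \eqref{eq-codazzi}. Then expand every vector appearing there, namely $L_i((Y\wedge X)V)$, $L_i\varphi L_i((Y\wedge X)L_iW)$ and $L_iW$, in the frame via $Z=\langle Z,W\rangle W+\sum_k(\langle Z,e_k\rangle e_k+\langle Z,\varphi e_k\rangle\varphi e_k)$. The key simplifications come from \eqref{eq13}.
\begin{itemize}
\item[(a)] $L_iW=W+\varepsilon_i fW=(1+\varepsilon_i h)W-\varepsilon_i\varphi V$.
\item[(b)] $\langle V,W\rangle=0$, so $(W\wedge e_j)V=\langle e_j,V\rangle W$, using the convention $(X\wedge Y)Z=\langle Y,Z\rangle X-\langle X,Z\rangle Y$.
\item[(c)] $\langle fW,W\rangle=h$, because $\langle\varphi V,W\rangle=0$.
\item[(d)] The commutation rule $f\varphi X-\varphi fX=-\langle V,X\rangle W-\langle W,X\rangle V$, used to move $f$ past $\varphi$.
\item[(e)] $f^2=I-\langle V,\cdot\rangle V$.
\item[(f)] The coefficient $\varepsilon_i^2=1$.
\end{itemize}
With these, each component collapses to a scalar multiple of $\langle V,e_k\rangle$, $\langle V,\varphi e_k\rangle$, or a term $\langle L_i\varphi L_i\,\cdot\,,\cdot\rangle$. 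For instance, the $W$-component of the first identity is $2\varepsilon_i\langle e_j,V\rangle\langle L_iW,W\rangle$ plus the $3\varepsilon_i(\cdots)$ term. Each of these contains $(1+\varepsilon_i h)$, and summing them produces the factor $4c_i(\varepsilon_i+h)$. The $e_k$- and $\varphi e_k$-components are handled the same way, and the constants $7+\varepsilon_ih$ and $1-\varepsilon_ih$ arise from combining contributions of the $2\varepsilon_iL_i(\cdot)$ and $L_i\varphi L_i(\cdot)$ terms after inserting (a).

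The second identity follows from the first by replacing $e_j$ with $\varphi e_j$ and using $\varphi^2e_j=-e_j$, which accounts for the sign changes. The third identity uses $(e_j\wedge\varphi e_l)V=\langle\varphi e_l,V\rangle e_j-\langle e_j,V\rangle\varphi e_l$, together with the expansion $\langle L_i\varphi L_i\varphi e_l,e_j\rangle$ for the $\langle X,L_i\varphi L_iY\rangle$ term.

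\emph{Main obstacle.} There is no conceptual difficulty; the hard part is keeping the bookkeeping of $\varphi V$ correct. Whenever $\varphi$ acts on $V$, one must rewrite $\langle\varphi V,e_k\rangle=-\langle V,\varphi e_k\rangle$ and $\langle\varphi V,\varphi e_k\rangle=\langle V,e_k\rangle$. The latter holds because $\langle V,W\rangle=0$ and $\varphi$ is an isometry on $W^\perp$. The integer coefficients are matched only after these rewrites, so I would verify each coefficient separately by taking inner products of both sides with $W$, $e_k$ and $\varphi e_k$.
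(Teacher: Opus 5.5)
Your route is the paper's: substitute $(W,e_j)$, $(W,\varphi e_j)$, $(e_j,\varphi e_l)$ into the Codazzi equation and read off components in $\{W,e_j,\varphi e_j\}$. The tools are \eqref{eq13} and the rewrites $\langle L_iW,e_k\rangle=\varepsilon_i\langle V,\varphi e_k\rangle$ and $\langle L_iW,\varphi e_k\rangle=-\varepsilon_i\langle V,e_k\rangle$, which are the paper's \eqref{eq16} and \eqref{eq15}. Your construction of the adapted frame is a useful addition that the paper omits.

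Your one new step, obtaining the second identity from the first via $e_j\mapsto\varphi e_j$, is sound in principle. The map $d_\nabla A(W,\cdot)$ is tensorial, and the first computation uses only $e_j\perp W$. But the step fails against the formulas as printed. In the $\varphi e_k$-coefficient of the first identity, the term $(1-\varepsilon_ih)\langle\varphi e_j,V\rangle\langle\varphi V,e_k\rangle$ becomes $(1-\varepsilon_ih)\langle\varphi^2e_j,V\rangle\langle\varphi V,e_k\rangle=+(1-\varepsilon_ih)\langle e_j,V\rangle\langle V,\varphi e_k\rangle$. The printed second identity instead has $-(1-\varepsilon_ih)\langle e_j,V\rangle\langle V,\varphi e_k\rangle$.

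A direct computation shows that the second identity is right and the first is not. Take the general formula \eqref{eq20} for $d_\nabla A(W,X)$, valid for every tangent $X$, and set $X=e_j$. By \eqref{eq16}, the $\varphi e_k$-component of $(\varepsilon_i-h)\langle e_j,L_iW\rangle V$ is $(1-\varepsilon_ih)\langle\varphi e_j,V\rangle\langle V,\varphi e_k\rangle$. So $\langle\varphi V,e_k\rangle$ in the first identity must be $\langle V,\varphi e_k\rangle$. This is exactly the $\varphi V$ bookkeeping you warn about, and the paper's own proof contains the same slip. After this correction, your shortcut reproduces the second identity term by term, using $\langle\varphi Y,e_k\rangle=-\langle Y,\varphi e_k\rangle$ on the last term. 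Without the correction, your claim that every component collapses to the stated expressions is false for this term. A proof has to flag and fix it rather than assert it.

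Your sample $W$-component also misses a contribution. The first Codazzi term together with the entire scalar coefficient of $L_iW$ gives $\bigl(2\varepsilon_i+3\varepsilon_i(1+\varepsilon_ih)+2(\varepsilon_i-h)\bigr)(1+\varepsilon_ih)\langle e_j,V\rangle=(7\varepsilon_i+h)(1+\varepsilon_ih)\langle e_j,V\rangle$. That is not $8(\varepsilon_i+h)\langle e_j,V\rangle$. The missing piece $(\varepsilon_i-h)(1+\varepsilon_ih)\langle e_j,V\rangle$ is the $W$-component of $L_i\varphi L_i\bigl((W\wedge e_j)L_iW\bigr)$. It is nonzero because $\langle L_i\varphi L_ie_j,W\rangle=-(\varepsilon_i-h)\langle e_j,V\rangle$.

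The identity driving all of this, which your list (a)--(f) lacks, is $\varphi L_iW=\varepsilon_iV$, hence $L_i\varphi L_iW=(\varepsilon_i-h)V$. With it, the first two identities are simply the components of \eqref{eq20} at $X=e_j$ and $X=\varphi e_j$. The factor $7+\varepsilon_ih=\varepsilon_i(7\varepsilon_i+h)$ comes from the first and third Codazzi terms, and $1-\varepsilon_ih=\varepsilon_i(\varepsilon_i-h)$ comes from the second.
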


	\begin{proof}
		We may write
		\begin{equation}\label{eq8}
			\begin{array}{rcl}
				d_\nabla A(W, e_j)&=&\left \langle d_\nabla A(W, e_j), W \right \rangle W \\
				& & +\sum_{k} \left ( \left \langle d_\nabla A(W, e_j), e_k \right \rangle e_k
				+ \left \langle d_\nabla A(W, e_j),\varphi e_k \right \rangle \varphi e_k\right  )\\
				d_\nabla A(W,\varphi e_j)&=&\left \langle d_\nabla A(W, \varphi e_j), W \right \rangle W\\
				& & +\sum_{k} \left ( \left \langle d_\nabla A(W, \varphi e_j), e_k \right \rangle e_k
				+ \left \langle d_\nabla A(W, \varphi e_j),\varphi e_k \right \rangle \varphi e_k\right  )\\
				d_\nabla A(e_j,\varphi e_l)&=&\left \langle d_\nabla A(e_j, \varphi e_l), W \right \rangle W \\
				& & +\sum_{k} \left ( \left \langle d_\nabla A(e_j, \varphi e_l), e_k \right \rangle e_k
				+ \left \langle d_\nabla A(e_j, \varphi e_l),\varphi e_k \right \rangle \varphi e_k\right  ).
			\end{array}
		\end{equation}
		
		Using equations \eqref{eq2.2}, \eqref{eq13}, and the Codazzi equation~\eqref{eq-codazzi} and noting that
		\begin{eqnarray}
			\langle L_i W,\varphi e_k\rangle &=& -\varepsilon_i \langle V, e_k\rangle, \label{eq15}\\
			\langle L_i W, e_k\rangle &=& \varepsilon_i \langle V,\varphi e_k\rangle, \label{eq16}
		\end{eqnarray}
		the Codazzi equation yields
		\begin{eqnarray*}
			\left \langle d_\nabla A(W, e_j), W \right \rangle
			&=& \sum_{i=1}^{2}4c_i(\varepsilon _i+h) \left \langle e_j,V \right \rangle.
		\end{eqnarray*}
		
		Proceeding analogously, we compute
		\begin{eqnarray*}
			\left \langle d_\nabla A(W, e_j), e_k \right \rangle
			&=&\sum_{i=1}^{2}\frac{c_i}{2} \Big[ (7+\varepsilon _ih)\left \langle e_j, V \right \rangle \left \langle  V,\varphi e_k\right \rangle \nonumber \\
			&&+\left ( 1-\varepsilon _ih \right )\left \langle\varphi e_j,  V \right \rangle \left \langle V,e_k \right \rangle
			-(1+\varepsilon _ih)\left \langle L_i\varphi L_ie_j,e_k  \right \rangle \Big],
		\end{eqnarray*}
		and
		\begin{eqnarray*}
			\left \langle d_\nabla A(W, e_j),\varphi e_k \right \rangle
			&=&\sum_{i=1}^{2}\frac{c_i}{2} \Big[ - (7+\varepsilon _ih)\left \langle e_j, V \right \rangle \left \langle  V,e_k\right \rangle \nonumber \\
			&&+\left ( 1-\varepsilon _ih \right )\left \langle\varphi e_j,  V \right \rangle \left \langle \varphi V,e_k \right \rangle
			+(1+\varepsilon _ih)\left \langle\varphi L_i\varphi L_ie_j,e_k  \right \rangle \Big].
		\end{eqnarray*}
		
		Substituting these expressions into the first equation of \eqref{eq8}, we obtain the first identity of the lemma.
		
		By the Codazzi equation, we also derive
		\begin{eqnarray*}
			\left \langle d_\nabla A(W,\varphi e_j),W \right \rangle
			&=&\sum_{i=1}^{2} 4c_i (\varepsilon _i+h)\left \langle  V,\varphi e_j \right \rangle,
		\end{eqnarray*}
		together with the corresponding tangential components. Substituting them into \eqref{eq8} yields the second identity.
		
		Finally, using again the Codazzi equation, we obtain
		\begin{eqnarray*}
			d_\nabla A(e_j, \varphi e_l)
			&=& \sum_{i=1}^2  \dfrac{c_i}{2} [2\varepsilon_i L_i((e_j\wedge \varphi e_l)V)
			+ L_i\varphi L_i((e_j\wedge \varphi e_l)L_iW)\nonumber \\
			&&+(3\varepsilon_i \langle (e_j\wedge \varphi e_l)V, L_iW \rangle
			+2\langle \varphi e_l, L_i\varphi L_ie_j \rangle)L_iW].
		\end{eqnarray*}
		
		Using identities \eqref{eq2.2}, \eqref{eq13}, \eqref{eq15}, and \eqref{eq16}, we compute
		\begin{eqnarray*}
			\langle d_\nabla A(e_j, \varphi e_l),W\rangle
			&=&\sum_{i=1}^2 c_i\Big[(3+\varepsilon_ih)(\langle V,\varphi e_l\rangle \langle V,\varphi e_j\rangle
			+\langle V, e_l\rangle \langle V, e_j\rangle)\nonumber\\
			&&- (1+\varepsilon_ih)\langle L_i\varphi L_i\varphi e_l,e_j\rangle\Big].
		\end{eqnarray*}
		
		Substituting these expressions into the last equation of \eqref{eq8}, we obtain the final identity of the lemma.
	\end{proof}

	
	\section{Umbilical hypersurfaces in products of complex space forms}\label{chapter3}
	
	Niebergall and Ryan~\cite{Niegerball-Ryan} proved that there exist no totally umbilical hypersurfaces in $\mathbb{CP}^n$ or $\mathbb{CH}^n$, a fact first established by Tashiro and Tachibana~\cite{Tashiro-Tachibana} in 1963. Furthermore, they demonstrated that the shape operator cannot be parallel:
	
	\begin{theorem}[Tashiro--Tachibana \& Niebergall--Ryan]\label{t4}
		Let $M$ be a hypersurface in a complex space form with constant holomorphic sectional curvature $4c \neq 0$. Then, the shape operator $A$ can neither be parallel nor totally umbilical; that is, the condition \(A=\lambda I\) cannot hold for any \(\lambda \in C^{\infty}(M)\). Consequently, a complex space form with nonzero constant holomorphic sectional curvature admits no hypersurfaces with a parallel shape operator, nor any totally umbilical hypersurfaces.
	\end{theorem}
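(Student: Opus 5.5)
The plan is to argue by contradiction from the Codazzi equation of a hypersurface $M$ in a complex space form $\overline{M}$ with $\overline{R}(X,Y)Z = c\bigl(X\wedge Y + JX\wedge JY + 2\langle X,JY\rangle J\bigr)Z$ (holomorphic sectional curvature $4c$, $c\neq 0$). Write $JX=\varphi X+\langle W,X\rangle\nu$ with $W=-J\nu$ as in the paper. The normal component of $\overline{R}(X,Y)Z$ is computed directly from this formula. It yields
\[
(\nabla_X A)Y-(\nabla_Y A)X = c\bigl(\langle X,W\rangle\varphi Y-\langle Y,W\rangle\varphi X+2\langle X,\varphi Y\rangle W\bigr),
\]
up to an overall sign convention. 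This is the one-factor analogue of \eqref{eq-codazzi}, with $V=0$ and $L=I$ there.

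For the parallel case, assume $\nabla A=0$, so the left-hand side vanishes identically. Choose a unit vector $X\perp W$; this is possible since $\dim M=2n-1\ge 3$ for $n\ge 2$. Take $Y=\varphi X$, so that $\langle X,W\rangle=\langle Y,W\rangle=0$ and $\langle X,\varphi Y\rangle=\langle X,\varphi^2X\rangle=-|X|^2=-1$ by \eqref{eq2.2}. The right-hand side then equals $-2cW\neq 0$, a contradiction. Hence $A$ is never parallel.

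For the umbilical case, assume $A=\lambda I$ with $\lambda$ smooth. Then $(\nabla_XA)Y=(X\lambda)Y$, and the Codazzi equation becomes
\[
(X\lambda)Y-(Y\lambda)X = c\bigl(\langle X,W\rangle\varphi Y-\langle Y,W\rangle\varphi X+2\langle X,\varphi Y\rangle W\bigr).
\]
Take the same pair $X\perp W$, $Y=\varphi X$. The left-hand side lies in $\operatorname{span}\{X,\varphi X\}$, which is orthogonal to $W$, since $\langle \varphi X,W\rangle=-\langle X,\varphi W\rangle=0$. The right-hand side is $-2cW$. Pairing both sides with $W$ gives $0=-2c$, which contradicts $c\neq 0$. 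Hence no totally umbilical hypersurface exists, including totally geodesic ones, for which $\lambda=0$. This also covers the constant-$\lambda$ case, which is already included in the parallel case. The final sentence of the theorem follows immediately.

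The only real obstacle is bookkeeping: getting the normal component of $\overline{R}$ right, including the $2\langle X,JY\rangle J$ term, whose normal part is $2\langle X,\varphi Y\rangle\langle W,Z\rangle$-type contributions. This must be done carefully to be sure the $W$-coefficient is $\pm 2c$ and does not cancel. The sign does not matter; only its nonvanishing does. The case $n=1$, where $M$ is a curve in a surface, should be excluded or treated as degenerate, since then $W^\perp=0$ and the statement is not meaningful. I would state the assumption $n\ge 2$ explicitly.
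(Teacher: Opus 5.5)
Your proof is correct, including the hypothesis $n\ge 2$ that you add. The paper does not prove Theorem~\ref{t4}; it only quotes it from Tashiro--Tachibana and Niebergall--Ryan, so there is no internal proof to match against. What you give is the standard Codazzi-equation argument.

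The normal component $c\bigl(\langle X,W\rangle\varphi Y-\langle Y,W\rangle\varphi X+2\langle X,\varphi Y\rangle W\bigr)$ is right, with $\langle JX,\nu\rangle=\langle X,W\rangle$ and the usual curvature conventions. For a unit $X\perp W$ and $Y=\varphi X$ it equals $-2cW$. This contradicts $\nabla A=0$; in fact it shows $\nabla A$ vanishes at no point. It also contradicts $A=\lambda I$, since then the left side $(X\lambda)\varphi X-((\varphi X)\lambda)X$ is orthogonal to $W$.

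The closest thing in the paper is its method for the product analogues (Proposition~\ref{prop-obstrucao}, Lemma~\ref{l2}), which always evaluates Codazzi with one argument equal to $W$. That route also works in a single factor. Your formula gives $(\nabla_WA)X-(\nabla_XA)W=c\,\varphi X\neq 0$ for $X\perp W$. In the umbilical case, pairing $(W\lambda)X-(X\lambda)W=c\,\varphi X$ with $\varphi X$ gives $0=c$. So the obstruction can be read off either from the $2\langle X,JY\rangle J$ term, which is your choice, or from the $JX\wedge JY$ term, which is the $W$-slot choice. Both are one line.

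Your restriction to $n\ge 2$ is not just about the statement being meaningful: for $n=1$ the theorem is false. A $1\times 1$ shape operator is always a multiple of the identity, and geodesic circles in $\mathbb{CP}^1$ have $\nabla A=0$. So $n\ge 2$ belongs in the statement.
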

	
	In what follows, we present results for the product of two complex space forms, which may be regarded as analogues of the Tashiro--Tachibana and Niebergall--Ryan theorems. The first result demonstrates that there are circumstances under which the first part of Theorem~\ref{t4} remains valid in products of complex space forms.
	
	\begin{proposition}
		\label{prop-obstrucao}
		Let $M$ be a real hypersurface in $\overline{M}=\mathbb{CQ}_1 \times \mathbb{CQ}_2$, where $\mathbb{CQ}_i$ are complex space forms with $c_1\neq 0$ or $c_2\neq 0$. Suppose that $\{V \neq 0\} \subset M$ is nonempty. Then $\nabla A \neq 0$ on $\{V \neq 0\}$.
	\end{proposition}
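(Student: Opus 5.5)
We argue by contradiction. Suppose $\nabla A = 0$ at every point of some open subset $U \subseteq \{V \neq 0\}$; it is enough to derive a contradiction on such a $U$. (If the conclusion is only meant pointwise, the same computation at a single point where $d_\nabla A$ vanishes gives the contradiction.) If $\nabla A=0$, then $d_\nabla A(X,Y)=(\nabla_Y A)X-(\nabla_X A)Y=0$ for all $X,Y$. The Codazzi equation \eqref{eq-codazzi} therefore says that its right-hand side, a purely curvature-theoretic expression in $V,W,f,\varphi,h$, vanishes identically on $U$. Our task is to show that this expression cannot vanish at a point where $V\neq 0$, unless $c_1=c_2=0$.

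We work with the frame $\{W,e_j,\varphi e_j\}$ of Lemma~\ref{l1}. The simplest components are the $W$-components of $d_\nabla A(W,e_j)$ and $d_\nabla A(W,\varphi e_j)$. By the lemma, these are
\[
\sum_{i=1}^2 4c_i(\varepsilon_i+h)\langle e_j,V\rangle = 4\bigl((c_1-c_2)+(c_1+c_2)h\bigr)\langle e_j,V\rangle
\]
and the analogous expression with $\varphi e_j$ in place of $e_j$. Since $\langle V,W\rangle=0$ by \eqref{eq2.2}, $V$ lies in $W^\perp=\operatorname{span}\{e_j,\varphi e_j\}$. So $V\neq 0$ forces some $\langle V,e_j\rangle$ or $\langle V,\varphi e_j\rangle$ to be nonzero. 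Vanishing of these components then gives
\[
(c_1-c_2)+(c_1+c_2)h=0 \quad \text{on } U .
\]
If $c_1+c_2=0$, this forces $c_1=c_2=0$, a contradiction. Otherwise $h=(c_2-c_1)/(c_1+c_2)$ is constant on $U$, and we move to the next step.

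For the constant-$h$ case we bring in more Codazzi components. We would use $\langle d_\nabla A(W,e_j),e_k\rangle=0$ and the $\varphi e_k$-components, contracting with the coefficients of $V$. Concretely, we pair the first identity of Lemma~\ref{l1} against $V$, summing over $j$ with weights $\langle V,e_j\rangle$ and $\langle V,\varphi e_j\rangle$. This produces scalar identities involving $|V|^2=1-h^2$, the quantities $\langle L_i\varphi L_i V,V\rangle$, and $\langle \varphi L_i\varphi L_i V, V\rangle$. These simplify using $fV=-hV$, so that $L_iV=(1-\varepsilon_i h)V$, together with $fW=hW-\varphi V$, the commutation relation $f\varphi X+\langle W,X\rangle V=\varphi fX-\langle V,X\rangle W$, and skew-symmetry of $\varphi$ (which kills $\langle \varphi V,V\rangle$). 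We expect to reach an equation of the form $\bigl(\alpha_1 c_1+\alpha_2 c_2\bigr)|V|^4=0$, or a combination such as $\sum_i c_i(1-\varepsilon_i h)^2(\dots)|V|^2=0$, with nonvanishing coefficients. Combining it with $(c_1-c_2)+(c_1+c_2)h=0$ and $|V|^2=1-h^2>0$ (so $|h|<1$) should force $c_1=c_2=0$, contradicting the hypothesis.

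The main obstacle is this second step: keeping track of the coefficients $(7+\varepsilon_i h)$, $(1-\varepsilon_i h)$ and $(1+\varepsilon_i h)$ and evaluating $\langle L_i\varphi L_i\varphi V,V\rangle$ correctly. The latter requires $\varphi V$ and $f\varphi V$; from the commutation relation, $f\varphi V = \varphi fV - |V|^2W = -h\varphi V-|V|^2W$. If that contraction degenerates, the fallback is the $W$-component of $d_\nabla A(e_j,\varphi e_l)$ from Lemma~\ref{l1}, traced over $j=l$. This gives $\sum_i c_i[(3+\varepsilon_i h)|V|^2-(1+\varepsilon_i h)\operatorname{tr}(L_i\varphi L_i\varphi|_{W^\perp})]=0$. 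The trace is computable from $\varphi^2=-I$ on $W^\perp$ and $f^2=I-V\otimes V$, and together with the linear relation in $h$ this should pin down $c_1=c_2=0$.
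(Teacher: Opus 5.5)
\textbf{There is a genuine gap: the second relation is never derived.} Your first step is correct, and it is exactly the $W$-component relation the paper also uses: $\sum_i c_i(\varepsilon_i+h)=0$, i.e.\ $c_1(1+h)=c_2(1-h)$. But one linear relation in $(c_1,c_2)$ cannot force $c_1=c_2=0$. After that point the proposal only says what you ``expect'' and what ``should'' happen.

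Moreover, the contraction you propose carries no information. Summing the first two identities of Lemma~\ref{l1} with weights $\langle V,e_j\rangle$, $\langle V,\varphi e_j\rangle$ is the same as evaluating the Codazzi equation at $(W,V)$. Using $L_iV=(1-\varepsilon_ih)V$, $L_iW=(1+\varepsilon_ih)W-\varepsilon_i\varphi V$, $f\varphi V=-h\varphi V-|V|^2W$ and $|V|^2=1-h^2$, one finds
\[
d_\nabla A(W,V)=4(1-h^2)\sum_{i=1}^2 c_i\bigl[(\varepsilon_i+h)W-\varphi V\bigr].
\]
This vector lies in $\operatorname{span}\{W,\varphi V\}$, so its $V$-component vanishes identically. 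The quantity $\langle L_i\varphi L_iV,V\rangle=(1-\varepsilon_ih)^2\langle\varphi V,V\rangle$ you list is also zero. So pairing with $V$ produces no equation of the form $(\alpha_1c_1+\alpha_2c_2)|V|^4=0$.

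The missing idea is to read off the $\varphi V$-component instead. Since $V\perp W$, we have $|\varphi V|=|V|\neq 0$ and $\{W,V,\varphi V\}$ is orthogonal. Hence $\nabla A=0$ gives both $\sum_i c_i(\varepsilon_i+h)=0$ and $c_1+c_2=0$, so $c_1=c_2=0$. That is the paper's proof, and it needs no case split on whether $h$ is constant.

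Your fallback can be made to work, but not as sketched. The sum over $j$ of $\langle L_i\varphi L_i\varphi e_j,e_j\rangle$ is only half of the trace of $L_i\varphi L_i\varphi$ on $W^\perp$, because the $e_j$ and $\varphi e_j$ contributions coincide. Computing that trace also requires $\operatorname{tr} f=2(n_1-n_2)-h$, which comes from $\operatorname{tr}F$, not from $\varphi^2$ and $f^2$ alone. Done correctly, the traced $W$-component of $d_\nabla A(e_j,\varphi e_j)$ gives
\[
c_1(1+h)(n_1-h)+c_2(1-h)(n_2+h)=0 .
\]
Combined with $c_1(1+h)=c_2(1-h)$, this yields $(n_1+n_2)\,c_1(1+h)=0$, and then $c_1=c_2=0$ because $|h|<1$ on $\{V\neq 0\}$. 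As written, though, neither route is carried out, so the proposal does not yet prove the proposition.
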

	
	\begin{proof}
		We argue by contradiction. Assume that $\nabla A = 0$ on $\{V \neq 0\}$. We evaluate the Codazzi equation with $Y = W$. Then
		\[
		(Y \wedge X)V = (W \wedge X)V = \langle X, V \rangle W,
		\]
		and hence
		\[
		L_i\big((Y \wedge X)V\big) = \langle X, V \rangle L_i W.
		\]
		
		Moreover,
		\[
		\langle (Y \wedge X)V, L_i W \rangle
		= (1 + \varepsilon_i h)\langle X, V \rangle,
		\]
		since
		\[
		\langle L_i W, W \rangle
		= \langle W, (I + \varepsilon_i f)W \rangle
		= 1 + \varepsilon_i h.
		\]
		
		Using the properties of $\varphi$, $f$, and $W$, we obtain
		\[
		L_i \varphi L_i W = (\varepsilon_i - h)V.
		\]
		
		Furthermore,
		\[
		(Y \wedge X)L_i W
		= \langle X, L_i W \rangle W - (1 + \varepsilon_i h)X,
		\]
		and therefore
		\[
		L_i \varphi L_i\big((Y \wedge X)L_i W\big)
		= \langle X, L_i W \rangle(\varepsilon_i - h)V
		- (1 + \varepsilon_i h)L_i \varphi L_i X.
		\]
		
		Consequently,
		\begin{equation}\label{eq20}
			d_\nabla A(W,X)
			= \sum_{i=1}^2 \frac{c_i}{2}
			\Big[(7\varepsilon_i + h)\langle X, V \rangle L_i W
			+ \langle X, L_i W \rangle(\varepsilon_i - h)V
			- (1 + \varepsilon_i h)L_i \varphi L_i X\Big].
		\end{equation}
		
		Taking $X = V$ and using
		\[
		L_i V = (1 - \varepsilon_i h)V,
		\qquad
		\langle V, L_i W \rangle = 0,
		\]
		we obtain
		\[
		d_\nabla A(W,V)
		= \sum_{i=1}^2 4c_i(1 - h^2)\big[(\varepsilon_i + h)W - \varphi V\big].
		\]
		
		Thus,
		\[
		(\nabla_V A)W - (\nabla_W A)V
		= \sum_{i=1}^2 4c_i(1 - h^2)\big[(\varepsilon_i + h)W - \varphi V\big].
		\]
		
		Since $|V|^2 = 1 - h^2 \neq 0$ on $\{V \neq 0\}$ and $\nabla A = 0$, it follows that
		\[
		\sum_{i=1}^2 c_i\big[(\varepsilon_i + h)W - \varphi V\big] = 0.
		\]
		
		As $\{W, V, \varphi V\}$ is orthogonal, we conclude that
		\[
		\sum_{i=1}^2 c_i(\varepsilon_i + h) = 0,
		\qquad
		\sum_{i=1}^2 c_i = 0.
		\]
		
		Hence $c_2 = -c_1$, and substituting into the first equation yields $c_1 = 0$, so $c_1 = c_2 = 0$, a contradiction. Therefore, $\nabla A \neq 0$ on $\{V \neq 0\}$.
	\end{proof}
	
	In the next result, we derive an obstruction to the parallelism of the shape operator.
	
	\begin{theorem}\label{t1}
		Let $M$ be a real hypersurface in $\overline{M}=\mathbb{CQ}_1 \times \mathbb{CQ}_2$, where $\mathbb{CQ}_i$ are complex space forms with $c_1\neq 0$ or $c_2\neq 0$. If $M$ is not $F$-invariant, then the shape operator $A$ cannot be parallel. In particular, if $M$ does not admit an almost product Riemannian structure, then $A$ is not parallel.
	\end{theorem}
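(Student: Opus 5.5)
The theorem follows from Proposition~\ref{f-invariancia} and Proposition~\ref{prop-obstrucao}. We argue by contraposition: assume $A$ is parallel on $M$, i.e.\ $\nabla A = 0$ everywhere, and show that $M$ is then $F$-invariant.

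First, suppose $M$ is not $F$-invariant. By definition there is a point $p\in M$ with $F(T_pM)\not\subset T_pM$. By the equivalence $1\Leftrightarrow 2$ in Proposition~\ref{f-invariancia}, applied to the singleton $U=\{p\}$, this means $V(p)\neq 0$. Since $V$ is smooth, the set $\{V\neq 0\}$ is a nonempty open subset of $M$.

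Next, we apply Proposition~\ref{prop-obstrucao}, whose hypotheses are met: $c_1\neq 0$ or $c_2\neq 0$, and $\{V\neq0\}$ is nonempty. It gives $\nabla A\neq 0$ on $\{V\neq 0\}$, which contradicts the assumption $\nabla A\equiv 0$. Hence a parallel shape operator forces $V\equiv 0$, and $A$ cannot be parallel when $M$ is not $F$-invariant.

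For the ``in particular'' clause, suppose $M$ does not admit the induced almost product Riemannian structure, meaning $(f,g)$ fails to satisfy $f^2=I$ and $f^*g=g$ on $M$. By the equivalence $2\Leftrightarrow 3$ in Proposition~\ref{f-invariancia}, $V$ is not identically zero, so $M$ is not $F$-invariant, and the first part applies.

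The only step needing care is reading Proposition~\ref{prop-obstrucao} correctly. Its proof derives a contradiction from $\nabla A=0$ at any single point where $V\neq 0$: the identity $\sum_i c_i[(\varepsilon_i+h)W-\varphi V]=0$ is pointwise, so pointwise nonvanishing of $V$ is all that is used. Its conclusion therefore holds even without openness, and no analytic continuation or density argument is needed.
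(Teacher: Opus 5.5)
Your proof is correct and is the paper's argument: Proposition~\ref{f-invariancia} gives a point with $V\neq 0$, Proposition~\ref{prop-obstrucao} then rules out $\nabla A\equiv 0$, and the ``in particular'' clause follows again from Proposition~\ref{f-invariancia}. Your closing remark is accurate but not needed for the theorem: the obstruction in Proposition~\ref{prop-obstrucao} is pointwise, because only $|V|^2=1-h^2\neq 0$ at a single point is used.
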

	
	\begin{proof}
		By Proposition~\ref{f-invariancia}, the condition that $M$ is not $F$-invariant implies that $V \neq 0$ at some point of $M$. Hence, the subset $\{V \neq 0\} \subset M$ is nonempty, and Proposition~\ref{prop-obstrucao} yields $\nabla A \not\equiv 0$ on $M$.
		
		The last assertion follows again from Proposition~\ref{f-invariancia}, since the absence of an almost product Riemannian structure implies that $M$ is not $F$-invariant.
	\end{proof}
	
	In the next lemma, we derive a formula for the gradient of the mean curvature of a totally umbilical real hypersurface, which will be essential in the proofs of the main results.
	
	\begin{lemma}\label{l2}
		Let $M$ be a totally umbilical real hypersurface of $\overline{M}=\mathbb{CQ}_1 \times \mathbb{CQ}_2$. Then
		\[
		\nabla H
		=4\left(\sum_{i=1}^2 c_i(\varepsilon_i+h)\right)V.
		\]
	\end{lemma}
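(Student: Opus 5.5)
Assume $A = H I$ with $H \in C^\infty(M)$. Then $(\nabla_X A)Y = (XH)Y$, so the left-hand side of the Codazzi equation~\eqref{eq-codazzi} is
\[
d_\nabla A(Y,X) = (XH)Y - (YH)X = \bigl(Y \wedge X\bigr)\nabla H .
\]
The plan is to compare this expression with the right-hand side of the Codazzi equation for suitable choices of $Y$ and $X$, and read off $\nabla H$ componentwise in the frame $\{W, e_j, \varphi e_j\}$ of Lemma~\ref{l1}.

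The natural first choice is $Y = W$, since formula~\eqref{eq20} in the proof of Proposition~\ref{prop-obstrucao} already computes $d_\nabla A(W,X)$ for every $X$, without assuming parallelism. That gives the identity
\[
(XH)W - (WH)X = \sum_{i=1}^2 \frac{c_i}{2}\Big[(7\varepsilon_i + h)\langle X, V\rangle L_iW + \langle X, L_iW\rangle(\varepsilon_i - h)V - (1+\varepsilon_i h)L_i\varphi L_i X\Big].
\]
Pair both sides with $W$ and take $X \perp W$. On the left, only $XH$ survives. On the right, we use:
\begin{itemize}
\item $\langle L_iW, W\rangle = 1 + \varepsilon_i h$ and $\langle V, W\rangle = 0$;
\item $\langle L_i\varphi L_i X, W\rangle = -\langle X, L_i\varphi L_i W\rangle = -(\varepsilon_i - h)\langle X, V\rangle$, by skew-symmetry of $\varphi$, symmetry of $L_i$, and the identity $L_i\varphi L_iW = (\varepsilon_i - h)V$ from that proof.
\end{itemize}
Collecting the coefficients of $\langle X, V\rangle$ gives
\[
\tfrac12\big[(7\varepsilon_i + h)(1+\varepsilon_i h) + (1+\varepsilon_i h)(\varepsilon_i - h)\big] = 4(1+\varepsilon_i h)\varepsilon_i = 4(\varepsilon_i + h),
\]
using $\varepsilon_i^2 = 1$. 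This matches the $W$-component $4c_i(\varepsilon_i+h)\langle e_j, V\rangle$ in Lemma~\ref{l1}, so for every $X \perp W$,
\[
XH = 4\sum_{i=1}^2 c_i(\varepsilon_i + h)\langle X, V\rangle .
\]

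It remains to determine $WH$, and I expect this to be the main obstacle. The $W$-component of the identity above is trivial when $X = W$, so it gives nothing. Instead, choose $X = e_j$ (or $\varphi e_j$) and take the $e_j$-component of both sides:
\begin{itemize}
\item the left side gives $-WH$;
\item the right side is the corresponding coefficient listed in Lemma~\ref{l1}.
\end{itemize}
Summing over $j$, that is, tracing over $W^\perp$, should produce a cancellation. The $L_i\varphi L_i$ terms are traced against skew operators, so $\operatorname{tr}(\varphi L_i\varphi L_i)$-type contributions must be handled via $f\varphi X - \varphi f X = -\langle W, X\rangle V - \langle V, X\rangle W$ from~\eqref{eq13}. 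The $V$-quadratic terms pair as $\langle e_j, V\rangle\langle V, \varphi e_j\rangle$ and vanish after summation by skew-symmetry.

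A cleaner alternative is to compare the $W$-component of $d_\nabla A(e_j, \varphi e_l)$ from Lemma~\ref{l1} with the left side $(\varphi e_l H)e_j - (e_j H)\varphi e_l$, whose $W$-component is $0$. This yields constraints but not $WH$ directly. So I would rely on the trace argument: the two computations should give $2(n-1)\,WH = 0$, hence $WH = 0$.

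Since $\langle V, W\rangle = 0$, the vector $V$ has no $W$-component, and the two results combine to
\[
\nabla H = 4\Bigl(\sum_{i=1}^2 c_i(\varepsilon_i + h)\Bigr)V .
\]
If $n = 1$ then $M$ is a curve-like case with $W^\perp = 0$ and $\nabla H = (WH)W$; I would treat it separately or note that it is excluded by dimension.
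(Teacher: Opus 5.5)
Your argument is correct and is essentially the paper's proof. You apply Codazzi with $Y=W$ and $A=HI$; the $W$-component gives $XH=4\sum_i c_i(\varepsilon_i+h)\langle X,V\rangle$ for $X\perp W$, and the diagonal components on $W^\perp$ give $WH=0$ (the paper adds the $e_j$- and $\varphi e_j$-diagonal entries for a single $j$ instead of taking the full trace). One simplification: you do not need the commutation relation between $f$ and $\varphi$, because $L_i\varphi L_i$ is skew-symmetric, so $\langle L_i\varphi L_iX,X\rangle=0$ directly, and the $V$-quadratic terms trace to a multiple of $\langle V,\varphi V\rangle=0$.
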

	
	\begin{proof}
		Let $M^{2n-1}$ be a real hypersurface of $\overline{M}^{2n}$ with unit normal vector field $\nu$. Consider the local orthonormal frame $\{W, e_j, \varphi e_j\}_{j=1}^{n-1}$ on $M$ introduced in Lemma~\ref{l1}, so that $\{\nu, W, e_j, \varphi e_j\}_{j=1}^{n-1}$ is a local frame on $\overline{M}$.
		
		From the Codazzi equation, assuming $A=\lambda I$, we obtain
		\[
		\begin{aligned}
			d_\nabla A(W,e_j)
			&=(\nabla_{e_j}A)W-(\nabla_WA)e_j
			=(e_j\lambda)W-(W\lambda)e_j,\\
			d_\nabla A(W,\varphi e_j)
			&=(\nabla_{\varphi e_j}A)W-(\nabla_WA)\varphi e_j
			=(\varphi e_j\lambda)W-(W\lambda)\varphi e_j.
		\end{aligned}
		\]
		
		Comparing these expressions with those obtained in Lemma~\ref{l1}, we deduce
		\begin{align}
			e_j\lambda&=\sum_{i=1}^{2}4c_i(\varepsilon_i+h)\langle V,e_j\rangle,\label{eq3}\\
			(W\lambda)\delta_{jk}&=\sum_{i=1}^{2}\frac{c_i}{2}
			\Big[(7+\varepsilon_i h)\langle e_j,V\rangle\langle V,\varphi e_k\rangle
			+(1-\varepsilon_i h)\langle \varphi e_j,V\rangle\langle V,e_k\rangle \nonumber\\
			&\hspace{2cm}-(1+\varepsilon_i h)\langle L_i\varphi L_i e_j,e_k\rangle\Big],\label{eq4}\\
			(\varphi e_j)\lambda&=\sum_{i=1}^{2}4c_i(\varepsilon_i+h)\langle V,\varphi e_j\rangle,\label{eq5}\\
			(W\lambda)\delta_{jk}&=\sum_{i=1}^{2}\frac{c_i}{2}
			\Big[-(7+\varepsilon_i h)\langle \varphi e_j,V\rangle\langle V,e_k\rangle
			-(1-\varepsilon_i h)\langle e_j,V\rangle\langle V,\varphi e_k\rangle \nonumber\\
			&\hspace{2cm}+(1+\varepsilon_i h)\langle \varphi L_i\varphi L_i\varphi e_j,e_k\rangle\Big].
			\label{eq6}
		\end{align}
		
		Since $\varphi$ is skew-symmetric and $L_i$ is symmetric, we have
		\[
		\langle L_i\varphi L_i e_j,e_j\rangle = 0,
		\qquad
		\langle \varphi L_i\varphi L_i\varphi e_j,e_j\rangle = 0.
		\]
		
		Setting $k=j$ in~\eqref{eq4} and~\eqref{eq6} and summing the resulting equations, we obtain $W\lambda=0$.
		
		Moreover,
		\[
		\left(\sum_{i=1}^{2}4c_i(\varepsilon_i+h)\right)V
		=\left(\sum_{i=1}^{2}4c_i(\varepsilon_i+h)\right)
		\sum_{j=1}^{n-1}
		\big(\langle V,e_j\rangle e_j+\langle V,\varphi e_j\rangle\varphi e_j\big),
		\]
		since $\langle V,W\rangle=0$. Using~\eqref{eq3} and~\eqref{eq5}, we conclude that
		\[
		\nabla\lambda
		=\sum_{j=1}^{n-1}\big((e_j\lambda)e_j+(\varphi e_j\lambda)\varphi e_j\big)
		=\left(\sum_{i=1}^{2}4c_i(\varepsilon_i+h)\right)V.
		\]
		
		Since $M$ is totally umbilical, $\lambda=H$, and the result follows.
	\end{proof}
	
	We now characterize totally umbilical hypersurfaces in products of complex space forms.
	
	\begin{theorem}\label{t3}
		Let $M$ be an oriented totally umbilical real hypersurface of
		$\overline{M}=\mathbb{CQ}_1 \times \mathbb{CQ}_2$. If $M$ is $F$-invariant, then $M$ has constant mean curvature. In particular, $M$ is either totally geodesic or an extrinsic hypersphere, that is, it is a totally umbilical hypersurface with constant nonzero mean
		curvature.
	\end{theorem}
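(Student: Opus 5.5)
The plan is to derive everything from Lemma~\ref{l2}. Since $M$ is totally umbilical, we may write $A=\lambda I$ with $\lambda=H$, and Lemma~\ref{l2} gives
\[
\nabla H = 4\Big(\sum_{i=1}^2 c_i(\varepsilon_i+h)\Big)V
\]
on all of $M$. The $F$-invariance hypothesis enters through Proposition~\ref{f-invariancia}: $M$ being $F$-invariant is equivalent to $V\equiv 0$ on $M$. Substituting this into the formula gives $\nabla H\equiv 0$. Taking $M$ connected, which is implicit in the statement (or arguing on each connected component otherwise), $H$ is therefore constant.

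The second assertion is then a dichotomy on the constant value of $H$. If $H=0$, then $A=HI=0$ and $M$ is totally geodesic. If $H\neq 0$, then $M$ is totally umbilical with constant nonzero mean curvature, which is exactly the paper's definition of an extrinsic hypersphere. The orientation assumption is used only so that $\nu$, and hence $A$ and $H$, are globally defined.

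The one point needing care is whether Lemma~\ref{l2} applies where the frame $\{W,e_j,\varphi e_j\}$ of Lemma~\ref{l1} is only locally defined. This causes no difficulty: the identity in Lemma~\ref{l2} is pointwise and frame-independent, so it holds on all of $M$. There is no genuine obstacle; the content of the theorem lies in Lemma~\ref{l2}, and the proof reduces to combining it with Proposition~\ref{f-invariancia}.

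As a consistency check, I would note that with $V=0$ one has $h^2=1$ by \eqref{eq13}. The scalar factor $\sum_i c_i(\varepsilon_i+h)$ therefore need not vanish; constancy of $H$ comes from the vanishing of $V$, not of that coefficient.
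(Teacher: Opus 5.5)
Your proposal is correct and follows the paper's proof: Proposition~\ref{f-invariancia} gives $V\equiv 0$, Lemma~\ref{l2} then yields $\nabla H=0$, and splitting into $H=0$ and $H\neq 0$ gives the totally geodesic and extrinsic hypersphere cases. Your remarks on connectedness, on the pointwise, frame-independent nature of Lemma~\ref{l2}, and on $h^2=1$ are sound points that the paper leaves implicit.
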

	
	\begin{proof}
		Let $M$ be a totally umbilical hypersurface of $\mathbb{CQ}_1 \times \mathbb{CQ}_2$. Since $M$ is $F$-invariant, Proposition~\ref{f-invariancia} implies that $V \equiv 0$. By Lemma~\ref{l2}, it follows that $\nabla H = 0$, hence $H$ is constant. If $H \equiv 0$, then $M$ is totally geodesic; otherwise, $M$ is an extrinsic hypersphere.
	\end{proof}
	
	We now consider the complementary case, where $M$ is not $F$-invariant.
	
	\begin{theorem}\label{t2}
		Let $M$ be an oriented totally umbilical real hypersurface of
		$\overline{M}=\mathbb{CQ}_1 \times \mathbb{CQ}_2$, where $\mathbb{CQ}_i$ are complex space forms with $c_1\neq 0$ or $c_2\neq 0$. If $M$ is not $F$-invariant, then $M$ does not have constant mean curvature. In particular, $M$ is neither totally geodesic nor an extrinsic hypersphere.
	\end{theorem}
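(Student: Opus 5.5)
Assume, for contradiction, that $M$ is totally umbilical and not $F$-invariant, yet has constant mean curvature $H$. By Proposition~\ref{f-invariancia}, the set $U=\{V\neq 0\}$ is nonempty, and it is open by continuity. Since $M$ is totally umbilical, we have $A=HI$. Constant $H$ then gives $\nabla A=0$ on $M$, and in particular on $U$. This directly contradicts Proposition~\ref{prop-obstrucao}, which asserts that $\nabla A\neq 0$ on $\{V\neq 0\}$ when $c_1\neq 0$ or $c_2\neq 0$. So $H$ cannot be constant.

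I also want a second route through Lemma~\ref{l2}, both as a check and because it shows more. Constant $H$ gives $\nabla H=0$. Lemma~\ref{l2} then yields
\[
\Big(\sum_{i=1}^2 c_i(\varepsilon_i+h)\Big)V=0,
\]
so $c_1(1+h)+c_2(h-1)=0$ on the open set $U$. If $c_1+c_2\neq 0$, this forces $h$ to equal the constant $(c_2-c_1)/(c_1+c_2)$ on $U$. If $c_1+c_2=0$, then $c_1=-c_2\neq 0$ and the relation becomes $2c_1=0$, which is a contradiction. In the remaining case ($h$ constant), I would differentiate $F\nu=V+h\nu$ using $\overline{\nabla}F=0$ and $A=HI$. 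This should give $\nabla h=-2\langle AV,\cdot\rangle$ up to sign, that is, $\nabla h=\mp 2HV$, and hence $H=0$ on $U$. One would then need an extra argument to reach a contradiction. That is the main obstacle on this route, so I would fall back on the first argument, which needs no further computation.

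For the final claims: a totally geodesic hypersurface has $H\equiv 0$, and an extrinsic hypersphere has constant nonzero $H$. Both have constant mean curvature, so both are excluded by what was just shown.

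The only delicate point in the first argument is that Proposition~\ref{prop-obstrucao} is applied on the nonempty set $\{V\neq 0\}$. This is exactly the setting established by Proposition~\ref{f-invariancia}, so I expect no real obstacle there.
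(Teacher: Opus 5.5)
Your first argument is correct and is exactly the paper's proof: total umbilicity with constant $H$ gives $A=HI$ and hence $\nabla A\equiv 0$, which is impossible on the nonempty set $\{V\neq 0\}$; the paper phrases this as a contradiction with Theorem~\ref{t1}, which is itself just Proposition~\ref{prop-obstrucao} applied there. Your second route through Lemma~\ref{l2} is not needed (and, as you note, it is incomplete when $c_1+c_2\neq 0$), so nothing depends on it.
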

	
	\begin{proof}
		Let $M$ be totally umbilical, so that $A = H I$. If $M$ is not $F$-invariant, then Proposition~\ref{f-invariancia} implies that $\{V \neq 0\}$ is nonempty. If $H$ were constant, then $\nabla A \equiv 0$, contradicting Theorem~\ref{t1}. Hence $H$ is not constant, and the conclusion follows.
	\end{proof}
	
	\begin{corollary}
		Let $M$ be an oriented real hypersurface of
		$\overline{M}=\mathbb{CQ}_1 \times \mathbb{CQ}_2$ that is not $F$-invariant, where $\mathbb{CQ}_i$ are complex space forms with $c_1\neq 0$ or $c_2\neq 0$. If $M$ has constant mean curvature, then $M$ is not umbilical.
	\end{corollary}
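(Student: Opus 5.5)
This corollary is the contrapositive of Theorem~\ref{t2}. I read ``umbilical'' here as ``totally umbilical'', as elsewhere in this section. Suppose, for contradiction, that $M$ is not $F$-invariant, has constant mean curvature $H$, and is totally umbilical, so that $A=HI$ on $M$. Since $H$ is constant, $\nabla A = (\nabla H)\otimes I = 0$ on all of $M$, so the shape operator is parallel. This contradicts Theorem~\ref{t1}, which says that a non-$F$-invariant hypersurface in $\mathbb{CQ}_1\times\mathbb{CQ}_2$ with $c_1\neq0$ or $c_2\neq 0$ cannot have parallel shape operator.

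The key steps in order are as follows.

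\begin{enumerate}
\item By Proposition~\ref{f-invariancia}, non-$F$-invariance gives a point where $V\neq 0$. Hence the open set $\{V\neq0\}$ is nonempty.
\item Total umbilicity with constant $H$ forces $\nabla A=0$ there.
\item Proposition~\ref{prop-obstrucao} shows that $\nabla A\neq 0$ on $\{V\neq 0\}$, which is the contradiction.
\end{enumerate}

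Equivalently, one can invoke Theorem~\ref{t2} directly. A totally umbilical non-$F$-invariant $M$ does not have constant mean curvature, so a constant mean curvature $M$ cannot be totally umbilical.

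A second route goes through Lemma~\ref{l2}, and it also clarifies why constant $H$ fails. On a totally umbilical $M$ we have $\nabla H=4\bigl(\sum_i c_i(\varepsilon_i+h)\bigr)V$. So constant $H$ with $V\neq0$ forces $c_1(1+h)+c_2(h-1)=0$ on the open set $\{V\neq 0\}$. Pointwise this does not immediately give a contradiction, because $h$ could a priori be the constant $h=(c_2-c_1)/(c_1+c_2)$. For this reason I rely on the Codazzi computation~\eqref{eq20} inside Proposition~\ref{prop-obstrucao}. That computation produces the two independent conditions $\sum c_i(\varepsilon_i+h)=0$ and $\sum c_i=0$, from which $c_1=c_2=0$.

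There is no real obstacle. The only care needed is to apply Proposition~\ref{prop-obstrucao} on the open set $\{V\neq 0\}$, where $|V|^2=1-h^2\neq0$ lets us divide, rather than on all of $M$.
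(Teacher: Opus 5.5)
Your proposal is correct and takes the same route as the paper. The paper derives the corollary as the contrapositive of Theorem~\ref{t2}, whose proof is your argument: $A=HI$ with $H$ constant gives $\nabla A\equiv 0$, contradicting Theorem~\ref{t1} (that is, Proposition~\ref{prop-obstrucao} on the nonempty set $\{V\neq 0\}$). Your side remark that Lemma~\ref{l2} alone does not immediately give the contradiction is accurate, and you correctly do not rely on it.
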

	
	\begin{proof}
		This follows from the contrapositive of Theorem~\ref{t2}.
	\end{proof}
	
	\bibliographystyle{amsplain}
	\bibliography{references}
\end{document}